\documentclass[12pt]{amsart}
\usepackage{amsmath, amssymb, amsfonts, amsthm}
\usepackage[utf8]{inputenc}
\usepackage[T1]{fontenc}
\usepackage{lmodern}
\usepackage{geometry}
\usepackage{xcolor}
\newtheorem{theorem}{Theorem}

\newtheorem{lemma}[theorem]{Lemma}

\newtheorem{proposition}[theorem]{Proposition}

\newtheorem*{question1}{Question 1}
\newtheorem*{question2}{Question 2}

\title{Minimality of $T_z^*$-Invariant Subspaces of $H^2(\mathbb D^2)$}
\author{João Marcos R. do Carmo}\thanks{Instituto Federal da Bahia-Campus Seabra, Brazil. E-mail: joaoribeiro@ifba.edu.br (João Marcos R. do Carmo)}
\author{Marcos S. Ferreira}\thanks{Universidade Estadual de Santa Cruz, Brazil. E-mail: msferreira@uesc.br (Marcos S. Ferreira)}

\begin{document}

\begin{abstract}
In this paper, we establish necessary conditions for the minimality of infinite-dimensional $T_z^*$-invariant subspaces of $H^2(\mathbb D^2)$. More precisely, we prove that if $M\subset H^2(\mathbb D^2)$ is an infinite-dimensional, minimal $T_z^*$-invariant subspace, then, for every $n\in\mathbb N_0$, either $\overline{P_n(M)}=H^2(\mathbb D)$ or $\overline{P_n(M)}^\perp
=\phi_{a,\theta}H^2(\mathbb D)$ where $\phi_{a,\theta}$ is an atomic singular inner function and $P_n$ denotes the orthogonal projection of $H^2(\mathbb D^2)$ onto $H^2(z)w^n$. As $T_z^*$ is a universal operator on $H^2(\mathbb D^2)$, this result reduces the Invariant Subspace Problem to the analysis of these two alternatives for infinite-dimensional minimal $T_z^*$-invariant subspaces.
\end{abstract}

\maketitle

\noindent\textbf{Mathematics Subject Classification (2020):} 30H10, 47B35, 47A15.

\noindent\textbf{Keywords:} Invariant subspace problem, Toeplitz operator, Hardy space, universal operator.

\section{Introduction}
Let $T$ be a bounded operator on a Hilbert space $\mathcal{H}$. A closed subspace $M\subset\mathcal{H}$ is said to be $T$-invariant if $TM\subset M$. The Invariant Subspace Problem (ISP) for complex separable Hilbert spaces asks whether every bounded operator has a nontrivial invariant subspace. For an overview of some modern approaches to the ISP, we refer the reader to the recent monograph \cite{Chalendar}.

In 1960, Rota \cite{Rota} introduced the notion of a universal operator, whose invariant subspaces encode the action of arbitrary bounded operators. More precisely, a bounded operator $U$ on $\mathcal{H}$ is called universal if, for every bounded operator $A$ on $\mathcal{H}$, there exist a $U$-invariant subspace $M$, a nonzero scalar $\lambda$, and a linear isomorphism $X\colon\mathcal H\to M$ such that
$$
UX=\lambda XA.
$$

The connection between universal operators and the ISP has continued to attract attention; see, for instance, \cite{Cowen2,Cowen3,Carmo3}.

The existence of universal operators provides a useful reformulation of the ISP in terms of minimal invariant subspaces. In particular, for a universal operator \(U\) on a separable, infinite-dimensional Hilbert space \(\mathcal H\), the ISP is equivalent to the assertion that every infinite-dimensional \(U\)-invariant subspace is non-minimal.

Let $H^2(\mathbb{D}^2)$ denote the Hardy space over the bidisk. In \cite{Ferreira}, the authors used Caradus's criterion for universality \cite{Caradus} to show that $T_z^*$ is universal on $H^2(\mathbb{D}^2)$. In this paper, we investigate the minimality of $T_z^*$-invariant subspaces. Our main result states that if $M\subset H^2(\mathbb D^2)$ is an infinite-dimensional minimal $T_z^*$-invariant subspace, then, for every $n\in\mathbb N_0$, at least one of the following alternatives holds:
$$
\overline{P_n(M)}=H^2(\mathbb D)
\quad\text{or}\quad
\overline{P_n(M)}^\perp
=\phi_{a,\theta}H^2(\mathbb D),
$$
where
$$
\phi_{a,\theta}(z)
=\exp\left(a\frac{z+e^{i\theta}}{z-e^{i\theta}}\right),
\qquad a>0,\quad \theta\in\mathbb R,
$$
and $P_n$ denotes the orthogonal projection of $H^2(\mathbb D^2)$ onto $H^2(z)w^n$.

Consequently, if $M\subset H^2(\mathbb D^2)$ is an infinite-dimensional $T_z^*$-invariant subspace and neither alternative holds for some $n\in\mathbb N_0$, then $M$ is non-minimal. Since $T_z^*$ is universal, this result identifies a restricted class of cases that must be addressed to settle the ISP through the minimal invariant subspaces of $T_z^*$.

\section{Preliminaries}

\subsection{The Hardy space $H^{2}(\mathbb{D}^{2})$}
The Hardy space $H^{2}(\mathbb{D}^2)$ consists of all holomorphic functions $f$ on the bidisk $\mathbb{D}^2$ for which
$$
\|f\|_{H^2(\mathbb{D}^2)}^2=\sup_{0<r<1}
\int_{\mathbb{T}^2}
|f(r\zeta_1,r\zeta_2)|^2\,d\sigma(\zeta)
<\infty
$$
where $d\sigma$ denotes normalized Lebesgue measure on $\mathbb{T}^{2}$.

The Toeplitz operator $T_{z}$ with symbol $z$ acts on $H^{2}(\mathbb{D}^2)$ as multiplication by the first coordinate function, that is,
$$
T_z f(z,w)=zf(z,w),
\qquad f\in H^2(\mathbb{D}^2).
$$

We also denote by $t_z$ the unilateral shift on the classical Hardy space $H^2(\mathbb{D})$, defined by
$$
t_z g(z)=zg(z), \qquad g\in H^2(\mathbb{D}).
$$

Let $H^2(z)$ and $H^2(w)$ denote the Hardy spaces on $\mathbb{D}$, expressed in the variables $z$ and $w$, respectively. Every function $f\in H^2(\mathbb{D}^2)$ admits a unique expansion
$$
f(z,w)=\sum_{n=0}^{\infty}f_n(z)w^n,
$$
where $f_n\in H^2(z)$ and
$$
\|f\|_{H^2(\mathbb{D}^2)}^2=\sum_{n=0}^{\infty}\|f_n\|_{H^2(z)}^2.
$$
Thus $H^2(\mathbb{D}^2)$ can be identified with the $H^{2}(z)$-valued Hardy space in the variable $w$.

For each $n\in\mathbb N_0$, set
$$
H_n=H^2(z)w^n.
$$
The preceding representation yields the orthogonal decomposition
\begin{equation}\label{eq1}
H^2(\mathbb D^2)=\bigoplus_{n=0}^{\infty}H_n.
\end{equation}
Since multiplication by $z$ leaves each $H_n$ invariant, and the same holds for its adjoint $T_z^*$, every $H_n$ is a reducing subspace for $T_z$. We denote by
$$
P_n:H^2(\mathbb D^2)\rightarrow H_n
$$
the orthogonal projection onto $H_n$. The operators $P_{n}$ provide a natural way to transfer the analysis of invariant subspaces in $H^{2}(\mathbb{D}^{2})$ to the one-variable Hardy space $H^{2}(\mathbb{D})$. In fact, we have.

\begin{proposition}\cite[Proposition 1.3]{Carmo2}
Let $M\subset H^{2}(\mathbb{D}^{2})$ be a $T^{*}_{z}$-invariant subspace. Then, for every $n\in\mathbb{N}$, $\overline{P_{n}(M)}$ is $T^{*}_{z}$-invariant subspace. Equivalently, under the identification $H_{n}\simeq H^{2}(z)$, $\overline{P_{n}(M)}$ corresponds to a $t_{z}^{*}$-invariant subspace of $H^{2}(\mathbb{D})$.
\end{proposition}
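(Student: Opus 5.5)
The plan is to check directly that $\overline{P_n(M)}$ is invariant under $T_z^*$. We use the orthogonal decomposition \eqref{eq1} and the fact, noted just before the statement, that each $H_n$ reduces $T_z$. Because $H_n$ is reducing for $T_z$, the orthogonal projection $P_n$ onto $H_n$ commutes with both $T_z$ and $T_z^*$. This is the standard fact that a closed subspace reduces an operator if and only if its projection commutes with that operator. Concretely, for $f=\sum_k f_k(z)w^k$ we have $T_z^*f=\sum_k (t_z^*f_k)(z)w^k$, since $T_z^*$ acts coefficientwise. Both sides of $P_nT_z^*f=T_z^*P_nf$ then equal $(t_z^*f_n)(z)w^n$.

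The argument then runs in three steps.

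\textbf{Step 1.} Take $f\in M$. Then
$$T_z^*(P_nf)=P_n(T_z^*f).$$
Since $M$ is $T_z^*$-invariant, $T_z^*f\in M$. Hence $T_z^*(P_n(M))\subset P_n(M)$.

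\textbf{Step 2.} Pass to the closure. $T_z^*$ is bounded, hence continuous, so
$$T_z^*\bigl(\overline{P_n(M)}\bigr)\subset \overline{T_z^*(P_n(M))}\subset\overline{P_n(M)}.$$
Thus $\overline{P_n(M)}$ is a closed $T_z^*$-invariant subspace. It lies inside $H_n$, because $H_n$ is closed.

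\textbf{Step 3.} Transfer to one variable. Let $U_n:H^2(z)\to H_n$ be the unitary map $U_ng=g(z)w^n$. Then $T_z^*U_n=U_nt_z^*$. Therefore $U_n^{-1}\bigl(\overline{P_n(M)}\bigr)$ is a closed $t_z^*$-invariant subspace of $H^2(\mathbb D)$, which gives the equivalent formulation in the statement.

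There is no substantial obstacle. The only point needing care is the commutation $P_nT_z^*=T_z^*P_n$. I would verify it directly from the coefficientwise formula for $T_z^*$ on the expansion $\sum_k f_k(z)w^k$. This formula follows from $\langle T_z^*f,g\rangle=\langle f,zg\rangle$, computed term by term against the orthogonal decomposition \eqref{eq1}. The argument works for every $n\in\mathbb N_0$, including $n=0$.
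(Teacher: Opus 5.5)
Your proof is correct. The paper gives no proof of this statement and only cites the earlier work of do Carmo and Ferreira, but its remark just before the proposition, that each $H_n$ reduces $T_z$, is the key fact you use, so your argument ($P_nT_z^*=T_z^*P_n$, continuity to pass to the closure, and the unitary $g\mapsto g(z)w^n$ intertwining $t_z^*$ with $T_z^*|_{H_n}$) is essentially the intended one.
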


\subsection{Invariant subspaces of shift operators}

Let us remember that for each $\alpha\in\mathbb{D}$ and each $f\in H^{2}(\mathbb{D})$, the reproducing kernel function has the property
$$
\langle f,k_{\alpha}\rangle=f(\alpha).
$$
The action of $t_{\phi}^{*}$ on $k_{\alpha}$ is given by
$t^{*}_{\phi}k_{\alpha}=\overline{\phi(\alpha)}k_{\alpha},
$ which means that each $k_{\alpha}$ is an eigenvector for $t^{*}_{\phi}$ for $\phi\in H^{\infty}(\mathbb{D})$. 

Given $g\in H^2(\mathbb{D})$, we denote by $\mathcal K_g$ the smallest $t_z^*$-invariant subspace of $H^2(\mathbb{D})$ containing $g$, namely
$$
\mathcal K_g=\overline{\operatorname{span}}
\{(t_z^*)^n g:n\in\mathbb N_0\}.
$$

The following proposition establishes the existence of a nontrivial invariant subspace for the restriction of $t_z^*$ to any infinite-dimensional invariant subspace.

\begin{proposition}
If $M\subset H^2(\mathbb{D})$ is an infinite dimensional invariant subspace of $t^{*}_{z}$, then $t^{*}_{z}|_{M}$ has a non-trivial invariant subspace.
\end{proposition}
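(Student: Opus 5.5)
By Beurling's theorem, any proper $t_z^*$-invariant subspace of $H^2(\mathbb D)$ has the form $M=(\phi H^2(\mathbb D))^\perp=K_\phi$ for some inner function $\phi$. If $M=H^2(\mathbb D)$, the subspace $zH^2(\mathbb D)^\perp=\mathbb C\cdot 1=K_z$ is invariant under $t_z^*$, one-dimensional, and hence nontrivial. We may therefore assume $M=K_\phi$ with $\phi$ inner and $K_\phi$ infinite-dimensional, which means $\phi$ is not a finite Blaschke product. The plan is to produce an inner divisor $\psi$ of $\phi$ with $\psi\neq\phi$ (up to a unimodular constant) and $\psi$ nonconstant. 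Then $\phi H^2\subsetneq \psi H^2\subsetneq H^2$, so $K_\psi$ is a closed $t_z^*$-invariant subspace with $\{0\}\neq K_\psi\subsetneq K_\phi$, and it is invariant under $t_z^*|_M$.

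To find $\psi$, factor $\phi=\lambda B S_\mu$, where $B$ is a Blaschke product and $S_\mu$ is a singular inner function with positive singular measure $\mu$.

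\emph{Case 1: $B$ has at least one zero $\alpha$.} Take $\psi=b_\alpha$, the Blaschke factor at $\alpha$. Then $K_\psi=\mathbb C k_\alpha$, which is invariant because $t_z^*k_\alpha=\overline{\alpha}k_\alpha$. It is proper in $K_\phi$ since $K_\phi$ is infinite-dimensional. This case uses only the eigenvector remark made above.

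\emph{Case 2: $B$ is constant, so $\phi=\lambda S_\mu$ with $\mu\neq 0$.} Take $\psi=S_{\mu/2}$. It is inner and nonconstant, and $\phi=\lambda\psi^2$, so $\psi$ divides $\phi$. Moreover $\phi/\psi=\lambda S_{\mu/2}$ is nonconstant, so $\phi H^2\subsetneq\psi H^2$. This gives $K_\psi$ nonzero and properly contained in $K_\phi$.

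The only point needing care is the strictness of the inclusions. For inner functions $\psi\mid\phi$, we have $K_\psi=K_\phi$ iff $\phi/\psi$ is constant, and $K_\psi=\{0\}$ iff $\psi$ is constant. Both follow immediately from uniqueness in Beurling's theorem, since $\psi_1H^2=\psi_2H^2$ forces $\psi_1/\psi_2$ to be a unimodular constant. This step is routine, so the whole argument reduces to the canonical factorization of inner functions.
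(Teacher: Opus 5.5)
Your proof is correct and follows essentially the same route as the paper. A zero $\alpha$ of the inner function $\phi$ (equivalently, an eigenvector $k_\alpha\in M$) gives the invariant line $\mathbb{C}k_\alpha$; otherwise $\phi$ is singular inner, and one passes to the singular inner function with measure $\gamma\mu$ (you take $\gamma=1/2$), exactly as the paper does. Your write-up is slightly more careful than the paper's: you treat $M=H^2(\mathbb{D})$ explicitly and justify the strict inclusions $\{0\}\neq K_\psi\subsetneq K_\phi$, which the paper leaves implicit.
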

\begin{proof} If $M$ has an eigenvector $f$ of $t^{*}_{z}$, there is nothing to show because in this case $\mathcal K_f\subset M$. Suppose then that $M$ does not have eigenvectors of $t^{*}_{z}$. By Beurling's Theorem we have $M^{\perp}=\phi H^2(\mathbb{D})$ for some inner function $\phi$. Because $k_\alpha\notin M$ for all $\alpha \in \mathbb{D}$, we have that $\phi(\alpha)\neq 0$ for all $\alpha\in \mathbb{D}$ and so $\phi$ is a singular inner function. We know that a singular inner function is defined by a measure $\mu$, and we get singular inner functions $\phi_{\gamma}$, defined by measure $\gamma \mu$, such that $\phi H^2(\mathbb{D})\subset \phi_{\gamma}H^2(\mathbb{D})$ by \cite[Thm. 2.6.7]{Martinez}, as wished
\end{proof}

\section{Minimality of $T^{*}_{z}$-invariant subspaces}

For each $f\in H^2(\mathbb{D})$, we consider $f^{\sharp}\in H^{2}(\mathbb{D})$ given by $f^\sharp(z)=\overline{f(\overline{z})}$. Given  $g\in H^2(\mathbb{D})$, define the operator $L_g:H^{2}(\mathbb{D})\rightarrow H^{2}(\mathbb{D})$ by
$$
L_{g}f(z)=\sum_{i=0}^{\infty}\langle (t^{*}_{z})^{i}g(z),f^\sharp(z)\rangle z^{i}.
$$

Our approach is based on the universality of $T_{z}^{*}$. Given a $T_{z}^{*}$-invariant subspace $M$, we will show that there exist $n\in\mathbb{N}_{0}$ and a $T_z^*$-invariant subspace $U$ such that
$$
U\subsetneq \overline{P_n(M)}
\qquad\text{and}\qquad
P_n(M)\cap U\neq\{0\}.
$$
These properties will allow us to conclude that $M$ is not a minimal $T_{z}^{*}$-invariant subspace.

Do Carmo and Ferreira demonstrated sufficient conditions for the ISP to hold. In our context, we have the following.

\begin{theorem}\label{t2}
If for each $g\in H^2(\mathbb{D})$, there is an invariant subspace $U\subset V_{t_{z}^{*},g}$ of $t^{*}_{z}$ such that $L_{g}(l^2(\mathbb{C}))\cap U\neq \lbrace 0\rbrace$, then the ISP is true.
\end{theorem}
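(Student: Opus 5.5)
The plan is to reduce the ISP, through the universality of the backward shift, to operators of the form $t_z^*|_{V}$. Here $V=V_{t_z^*,g}$ is the $t_z^*$-invariant subspace attached to $g$. I will read $V_{t_z^*,g}$ as the closure of the range of $L_g$, identifying $l^2(\mathbb C)$ with $H^2(\mathbb D)$ through Taylor coefficients. If the intended definition is instead $\mathcal K_g$, the same argument applies, since the two have the same closure.

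\textbf{Intertwining and invariance of $V$.} First I would record the identity
$$
L_g t_z = t_z^* L_g .
$$
This is the computation $(zf)^\sharp = z f^\sharp$, so that
$$
\langle (t_z^*)^i g,(zf)^\sharp\rangle=\langle (t_z^*)^{i+1}g,f^\sharp\rangle .
$$
Consequently the range of $L_g$, and hence its closure $V$, is $t_z^*$-invariant. The operator $L_g$ is bounded, since it is essentially the adjoint-type map $f\mapsto(\langle (t_z^*)^i g, f^\sharp\rangle)_i$ and $\|t_z^*\|\le1$. Strictly speaking, one may need to rescale by $r<1$ to get convergence; I would check that $L_g$ is Hankel-like and that boundedness follows from the Hankel structure, or else pass to the scaled version.

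\textbf{Reduction of the ISP.} Let $A$ be an operator on a separable infinite-dimensional Hilbert space $\mathcal H$.
\begin{itemize}
\item After scaling, assume $\|A\|<1$.
\item We may assume every nonzero vector is cyclic for $A^*$, otherwise we are done.
\item Fix a unit vector $e$ and define $W y=\sum_i\langle A^i y,e\rangle z^i$. Then $W$ is injective by cyclicity, bounded since $\|A\|<1$, and satisfies $WA=t_z^*W$.
\end{itemize}
By Rota's model, equivalently the universality of $T_z^*$ quoted from \cite{Ferreira} with Caradus's criterion, $A$ is similar to $t_z^*$ restricted to an invariant subspace. In the cyclic case this subspace is the closure of $W(\mathcal H)$, which is a cyclic $t_z^*$-invariant subspace of $H^2(\mathbb D)$. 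The remaining task is to realize it as $V_{t_z^*,g}$ for a suitable $g$. The natural candidate is the cyclic vector, namely $g=W e$ or a conjugate $g^\sharp$, so that $W(\mathcal H)$ is contained in $L_g(l^2)$ up to the $\sharp$ twist. It thus suffices to show that for each $g$, the restriction $t_z^*|_V$ has a nontrivial invariant subspace, with $W(\mathcal H)$ matching $L_g(l^2)$.

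\textbf{Using the hypothesis.} Take $U\subset V$ as in the statement, with $0\ne h=L_g f\in U$. The subspace $U$ is $t_z^*$-invariant.
\begin{itemize}
\item If $U\ne V$, then $U$ is a nontrivial invariant subspace of $t_z^*|_V$.
\item Its pullback $\{y: Wy\in U\}$ is closed and $A$-invariant. It is nonzero because $U$ meets the range, which is exactly where the condition $L_g(l^2)\cap U\ne\{0\}$ is used, and it is proper since $U\ne V$.
\item If $U=V$ is forced, I would instead apply the hypothesis to $h$ in place of $g$, using $V_{t_z^*,h}\subset V$, to get a strictly smaller piece.
\end{itemize}

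\textbf{Main obstacle.} The hardest step is matching $W(\mathcal H)$ with $L_g(l^2)$ exactly, rather than merely densely, because the intersection condition only controls the actual range and not its closure. I would handle this by proving $L_g$ and $W$ have the same range up to a bounded invertible change of variables, via the $\sharp$-conjugation and $\|A\|<1$. I expect to essentially reproduce the argument of do Carmo--Ferreira that the paper cites.
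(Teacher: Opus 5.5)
There is a genuine gap. Your skeleton is right: intertwine $A$ with $t_z^*$ through $Wy=\sum_i\langle A^iy,e\rangle z^i$, then pull $U$ back through $W$. The paper's proof is only a citation of Theorem 16 of do Carmo--Ferreira, so there is no internal argument to compare with. However, the step everything hinges on is missing, and the route you propose for it cannot work. For $N=W^{-1}(U)$ to be nonzero you need $L_g(l^2(\mathbb C))\subseteq W(\mathcal H)$, and for $N$ to be proper you need $g\in W(\mathcal H)$. You instead aim at $W(\mathcal H)\subseteq L_g(l^2(\mathbb C))$, or at equality of the two ranges up to a bounded invertible change of variables. That is false. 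For $g=We$ one has $L_g(l^2(\mathbb C))=W(R)$ with $R=\{\sum_k\alpha_kA^ke:\alpha\in l^2\}$. The map $\alpha\mapsto\sum_k\alpha_kA^ke$ is Hilbert--Schmidt because $\|A^ke\|\le\|A\|^k$, so $R\neq\mathcal H$, and since $W$ is injective, $L_g(l^2(\mathbb C))\subsetneq W(\mathcal H)$.

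The missing observation is the inclusion in the other direction, which is immediate. $L_g$ is the Hankel matrix $(g_{i+k})$, so $L_g\alpha=\sum_k\alpha_k(t_z^*)^kg$. Your candidate $g=We$ works, and the $\sharp$ twist plays no role. Since $(t_z^*)^kWe=WA^ke$, you get $L_g\alpha=W\bigl(\sum_k\alpha_kA^ke\bigr)$. Hence if $0\neq L_g\alpha\in U$, the vector $x=\sum_k\alpha_kA^ke$ is a nonzero element of the closed $A$-invariant subspace $N$. Moreover $N\neq\mathcal H$, since otherwise $(t_z^*)^kg=WA^ke\in U$ for all $k$, giving $V_{t_z^*,g}\subseteq U$. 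No cyclicity or injectivity assumption and no matching of ranges is needed.

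Two further points. First, your branch ``if $U=V$ is forced, apply the hypothesis to $h$'' is not an argument. If the hypothesis allowed $U=V_{t_z^*,g}$, it would hold trivially for every $g\neq 0$ and yield nothing. Passing to $h$ does not help, because $V_{t_z^*,h}$ may equal $V$. The hypothesis has to be read as $U\subsetneq V_{t_z^*,g}$, which is how the paper uses it ($U\subsetneq\overline{P_n(M)}$). Second, $A$ is not similar to $t_z^*$ restricted to $\overline{W(\mathcal H)}$. $W$ is itself Hilbert--Schmidt, hence never bounded below on an infinite-dimensional space. Rota's model needs infinite multiplicity, which is why the universal operator is $T_z^*$ on $H^2(\mathbb D^2)$ rather than $t_z^*$. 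Likewise, boundedness of $L_g$ does not follow from $\|t_z^*\|\le 1$, since the Hankel matrix $(g_{i+k})$ is unbounded for general $g\in H^2(\mathbb D)$. Neither of these is fatal once you restrict to $g=We$: there $L_g$ is the composition of $W$ with $\alpha\mapsto\sum_k\alpha_kA^ke$, hence bounded, and only the intertwining $WA=t_z^*W$ is used.
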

\begin{proof}
It follows directly from \cite[Theorem 16]{Carmo}.
\end{proof}

In \cite{Carmo2}, do Carmo and Ferreira showed that if $M\subset H^{2}(\mathbb{D}^{2})$ is a $T_z^{*}$-invariant subspace and $g\in P_n(M)$, then $L_g(H^{2}(\mathbb{D}))\subset P_n(M)$. As a consequence, we have the following.

\begin{proposition}\label{t1}
Let $M\subset H^2(\mathbb{D})$ be a $t_z^*$-invariant subspace. If $\ker(t_z^*)\cap M=\{ 0\}$, then $t_{z}^{*}|_{M}$ is an bijection.
\end{proposition}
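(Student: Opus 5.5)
The plan is to identify $M$ through Beurling's theorem and then construct preimages under $t_z^*$ explicitly. Injectivity is immediate: if $f\in M$ and $t_z^*f=0$, then $f\in\ker(t_z^*)\cap M=\{0\}$. So the content of the statement is surjectivity, namely $t_z^*(M)=M$. The inclusion $t_z^*(M)\subset M$ is just invariance, so what must be shown is that every $f\in M$ has a preimage in $M$.

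First I reduce to a model space. Since $\ker(t_z^*)$ is the space of constants, the hypothesis says $1\notin M$, so $M\neq H^2(\mathbb D)$. If $M=\{0\}$ there is nothing to prove. Otherwise $M^\perp$ is a nonzero closed $t_z$-invariant subspace, and Beurling's theorem (as in the proof of the preceding proposition) gives $M^\perp=\phi H^2(\mathbb D)$ for some inner function $\phi$. Hence $M=H^2(\mathbb D)\ominus \phi H^2(\mathbb D)$.

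Next I translate the hypothesis into a condition on $\phi$. The orthogonal projection of $1$ onto $\phi H^2(\mathbb D)$ is $\overline{\phi(0)}\phi$. Therefore
$$P_M1=1-\overline{\phi(0)}\phi,$$
and $1\in M$ exactly when $\phi(0)=0$. The hypothesis thus gives $\phi(0)\neq 0$. I expect this to be the key (though short) step, because it is the only place where the hypothesis enters.

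Finally, given $f\in M$, I look for a preimage of the form $g=zf+c$ with $c\in\mathbb C$. Since $t_z^*(zf)=f$ and $t_z^*c=0$, any such $g$ satisfies $t_z^*g=f$, so it only remains to choose $c$ with $g\perp\phi h$ for every $h\in H^2(\mathbb D)$. Write $h=h(0)+zh_1$ with $h_1\in H^2(\mathbb D)$. For the second part,
$$\langle zf+c,\,z\phi h_1\rangle=\langle f,\phi h_1\rangle+c\,\langle 1, z\phi h_1\rangle=0+0,$$
because $f\in M$ and because $z\phi h_1$ vanishes at $0$. So orthogonality reduces to the single condition
$$\langle zf,\phi\rangle+c\,\overline{\phi(0)}=0,$$
which is solvable since $\phi(0)\neq0$. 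With this choice of $c$ we get $g\in M$ and $t_z^*g=f$, which proves surjectivity. As a by-product, the construction gives an explicit bounded inverse $f\mapsto zf-\langle zf,\phi\rangle/\overline{\phi(0)}$, consistent with the open mapping theorem.
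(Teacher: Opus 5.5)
Your proof is correct, and it reaches the conclusion by a different route from the paper's.

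The paper never identifies $M$ as a model space. It uses the hypothesis only to choose some $g=\sum_{i\ge0}\overline{\alpha_i}z^i\in M^\perp$ with $\alpha_0\neq0$, which exists since $1\notin M$. It then observes that for $f\in M$ the series $\sum_{i\ge0}\alpha_i t_z^{*i}f$ has Taylor coefficients $\langle t_z^{*k}f,g\rangle=\langle f,z^kg\rangle=0$; this is its appeal to the well-definedness of $L_g$. Finally it writes $\alpha_0f=t_z^*\bigl(-\sum_{i\ge1}\alpha_i t_z^{*(i-1)}f\bigr)$.

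Both arguments turn on a nonzero constant term: $\alpha_0\neq0$ in the paper, $\phi(0)\neq0$ in yours. In fact the two preimages coincide. Summing the paper's series coefficientwise gives $zf-\langle zf,g\rangle/\overline{g(0)}$. Writing $g=\phi h_0$ with $h_0(0)\neq0$, one has $\langle zf,g\rangle/\overline{g(0)}=\langle zf,\phi\rangle/\overline{\phi(0)}$, so this is exactly your vector.

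The paper's version looks lighter: it seems to need only one annihilating vector, and it is phrased in the $L_g$ language used elsewhere in the paper. However, it leaves implicit why $-\sum_{i\ge1}\alpha_i t_z^{*(i-1)}f$ converges in norm to an element of the closed subspace $M$. Your argument settles that point directly. You check orthogonality against $\phi H^2(\mathbb D)=\mathbb C\phi\oplus z\phi H^2(\mathbb D)$, using Beurling precisely to know that $M^\perp\ominus zM^\perp=\mathbb C\phi$. This avoids all convergence questions and gives an explicit bounded inverse $f\mapsto zf-\langle zf,\phi\rangle/\overline{\phi(0)}$.
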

\begin{proof}
We must show that $t_{z}^{*}|_{M}$ is surjective, that is, that $M\subset t_{z}^{*}(M)$. Let $f\in M$. For $g(z)=\displaystyle\sum_{i=1}^{\infty}\overline{\alpha}_i z^i\in H^{2}(\mathbb{D})$, it follows from the well-definedness of $L_g$ that 
$$
\sum_{i=1}^{\infty}\alpha_it_z^{*i}f=\langle t_z^{*i}f,g\rangle z^i
$$ 
and therefore 
$$ 
\sum_{i=1}^{\infty}\alpha_it_z^{*i}f=0\iff\sum_{i=1}^{\infty}\overline{\alpha}_i z^i\in M^{\perp}. 
$$

Since $\ker(t_z^*)\cap M=\{ 0\}$ there exist $g(z)= \sum_{i=1}^{\infty}\overline{\alpha}_i z^i\in M^{\perp}$ and $ \alpha_0\neq 0$ such that
$$
\alpha_{0}f=\sum_{i=1}^{\infty}-\alpha_it_{z}^{*i}f\in t_z^*(M)
$$
and thus $f\in t^{*}_{z}(M)$.
\end{proof}



To establish our main result, we need two lemmas.

\begin{lemma}\label{t3}
Let $M\subseteq H^2(\mathbb{D}^2)$ be a $T_{z}^{*}$-invariant subspace. Suppose there exists $n\in\mathbb{N}_{0}$ such that $\{0\}\neq\overline{P_n(M)}=\overline{\operatorname{span}}(U_0+U_1)$, where $U_0$ and $U_1$ are $t_{z}^{*}$-invariant subspaces and $U_0\setminus U_1$ and $U_1\setminus U_0$ are both non-empty. Under these conditions, $M$ is non-minimal.
\end{lemma}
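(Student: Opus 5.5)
Since $U_0\setminus U_1\neq\emptyset$ and $U_1\setminus U_0\neq\emptyset$, neither of $U_0,U_1$ contains the other. Both are closed $t_z^*$-invariant subspaces of $\overline{P_n(M)}\simeq H^2(\mathbb D)$. The strategy announced before Theorem~\ref{t2} is to find a $T_z^*$-invariant $U\subsetneq\overline{P_n(M)}$ with $P_n(M)\cap U\neq\{0\}$, and then pull $U$ back to $M$. I will take $U$ to be $U_0$ or $U_1$, viewed inside $H_n$. Each is proper in $\overline{P_n(M)}$: if, say, $U_0=\overline{P_n(M)}$, then $U_1\subset U_0$, contradicting $U_1\setminus U_0\neq\emptyset$.

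\textbf{Key step: $P_n(M)\cap U_i\neq\{0\}$ for some $i$.} First I show that $P_n(M)\cap(U_0+U_1)\neq\{0\}$. Pick $g\in P_n(M)$ nonzero. By the result of \cite{Carmo2}, $L_g(H^2(\mathbb D))\subset P_n(M)$, and $L_g$ has range rich in elements built from the orbit $(t_z^*)^ig$. Write $g$ as a limit of elements $u_0^{(k)}+u_1^{(k)}$. I then apply suitable operators $L_g$, or polynomials in $t_z^*$, which preserve both $U_i$ and $P_n(M)$, to land in $U_0+U_1$. Passing from a nonzero element of $P_n(M)\cap(U_0+U_1)$ to a nonzero element of a single $P_n(M)\cap U_i$ is the delicate part.

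For that I use Beurling: $U_i^\perp=\phi_iH^2$ with inner $\phi_i$. Then $\overline{U_0+U_1}^{\perp}=\operatorname{lcm}(\phi_0,\phi_1)H^2$ and $U_0\cap U_1=(\gcd\text{-model})$. Take $h=u_0+u_1\in P_n(M)$ and apply the operator $t_{\phi_1}^*$. It kills $u_1$, since $U_1=K_{\phi_1}=\ker t_{\phi_1}^*$. It maps $P_n(M)$ into itself, provided $t_{\phi_1}^*$ can be realized through the $T_z^*$-functional calculus on $M$. This requires the $H^\infty$ functional calculus of the contraction $T_z^*$ restricted to $M$, which holds because it is a completely non-unitary contraction, or else via an $L_g$-type construction. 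The result is $t_{\phi_1}^*u_0\in P_n(M)\cap U_0$. If this is zero for every choice of $h$, then $u_0\in U_0\cap U_1$. Repeating with the roles swapped, all of $P_n(M)$ would lie in $\overline{U_0\cap U_1}$, whose closure is strictly smaller than $\overline{P_n(M)}$, a contradiction. So some $U_i$ meets $P_n(M)$ nontrivially.

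\textbf{Conclusion.} Set $N=\{f\in M: P_nf\in U\}$. This is closed because $U$ is closed and $P_n$ is continuous. It is $T_z^*$-invariant because $P_n$ commutes with $T_z^*$ and $U$ is invariant. It is nonzero, since $P_n(M)\cap U\neq\{0\}$ gives some $f$ with $0\neq P_nf\in U$. It is proper, since otherwise $P_n(M)\subset U$ and hence $\overline{P_n(M)}\subset U$, contradicting properness of $U$. Hence $M$ is non-minimal.

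The main obstacle I expect is justifying that $t_{\phi_1}^*$, or an equivalent annihilating operator, preserves $P_n(M)$ itself, not just its closure, while also ensuring the image is nonzero. If the functional calculus argument fails, I would instead use $L_g$ together with Proposition~\ref{t1}.
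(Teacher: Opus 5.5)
Your overall strategy is the paper's. Note that the paper's $L_u(h)$ with $h^\sharp\in U_0^\perp$ is just $t^*_{h^\sharp}u$, since its $i$-th coefficient is $\langle t_z^{*i}u,h^\sharp\rangle=\langle u,z^ih^\sharp\rangle$. That is a co-analytic Toeplitz operator that kills $U_0$ and preserves both $U_1$ and $P_n(M)$. You use $t^*_{\phi_1}$ with the roles of $U_0,U_1$ exchanged, and your pull-back $N=\{f\in M:P_nf\in U\}$ correctly supplies the final step the paper leaves implicit.

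\textbf{The gap.} As written, your key step has a genuine gap. You apply $t^*_{\phi_1}$ to some $h=u_0+u_1\in P_n(M)$ lying in the \emph{algebraic} sum $U_0+U_1$, so you need $P_n(M)\cap(U_0+U_1)\neq\{0\}$. You never establish this. $P_n(M)$ is only known to sit inside the closure $\overline{\operatorname{span}}(U_0+U_1)$, and two dense non-closed subspaces can meet only in $0$. Your proposed justification, applying operators that preserve both $U_i$ and $P_n(M)$ ``to land in $U_0+U_1$'', cannot work: such operators map the closure of $U_0+U_1$ only into that closure. The fallback is also invalid. If $t^*_{\phi_1}u_0=0$ for every admissible $h$, you only get $P_n(M)\cap(U_0+U_1)\subset\ker t^*_{\phi_1}=U_1$, and, swapping roles, $\subset U_0$. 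This says nothing about elements of $P_n(M)$ outside $U_0+U_1$, so you cannot conclude $P_n(M)\subset U_0\cap U_1$.

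\textbf{The fix.} What is missing is the approximation argument the paper runs with $x_i+y_i\to u$. The operator $t^*_{\phi_1}$ is bounded, vanishes on $U_1$, and maps $U_0$ into the closed subspace $U_0$. Hence it maps all of $\overline{\operatorname{span}}(U_0+U_1)=\overline{P_n(M)}$ into $U_0$. So take any nonzero $u\in P_n(M)$, with no decomposition. Either $u\in\ker t^*_{\phi_1}=U_1$, giving $0\neq u\in P_n(M)\cap U_1$, or $0\neq t^*_{\phi_1}u\in P_n(M)\cap U_0$. With this replacement, together with your pull-back, the proof is complete.

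\textbf{Your flagged obstacle is routine.} The point you flagged as the main obstacle needs no special argument. $M^\perp$ is $T_z$-invariant, hence invariant under multiplication by every $\psi(z)\in H^\infty$, by bounded pointwise polynomial approximation. Therefore $T^*_\psi M\subset M$, and since $T^*_\psi$ commutes with $P_n$, you get $T^*_\psi P_n(M)\subset P_n(M)$. If you prefer the Sz.-Nagy--Foias calculus, remember that $\psi(T_z^*)=T^*_{\psi^\sharp}$, so use $\phi_1^\sharp$. Working with the bounded operator $t^*_{\phi_1}$ is also more robust than the paper's use of $L_u(h)$ for arbitrary $h$, which need not lie in $H^2(\mathbb D)$ when $h^\sharp$ is unbounded.
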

\begin{proof}
Given that $u\in  P_n(M)$ is non-zero, there exist sequences $(x_i)$ in $U_0$ and $(y_i)$ in $U_1$, such that $(x_i+y_i)\rightarrow  u$. Thus 
$$
L_{x_i+y_i}(h) \rightarrow L_{u}(h)
$$
for all  $h\in H^2(\mathbb{D})$.

If $u\notin U_0$, there exists $h^\sharp\in U_{0}^{\perp}$ such that $L_{u}(h)\neq 0$. 
Since
\begin{align*}
L_{x_i+y_i}(h) &=\sum_{n=0}^{\infty}\langle t_{z}^{*(k)}(x_i+y_i),h^\sharp\rangle z^k \\
&= \sum_{n=0}^{\infty}\langle t_{z}^{*(k)}(y_i),h^\sharp\rangle z^k \\
&= L_{y_i}(h)\in U_1
\end{align*}
we have $L_u(H^2(\mathbb{D}))\cap U_1\neq \lbrace 0\rbrace$ and therefore $M$ is non-minimal.
\end{proof}

\begin{lemma}\label{t4}
If $V=\overline{\operatorname{span}}(U_0+U_1)$ is a $t^{*}_{z}$-invariant subspace, where $ U_{0}^{\perp}=\phi_0 H^2(\mathbb{D})$ and $U_{1}^{\perp}=\phi_1 H^2(\mathbb{D})$, then $V^{\perp}=\phi H^2(\mathbb{D})$, where $\phi$ is the least common multiple of $\phi_0$ and $\phi_1$.
\end{lemma}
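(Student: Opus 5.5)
The plan is to pass to orthogonal complements and use Beurling's theorem together with the lattice structure of inner divisors. The closed span of a sum of two subspaces has orthogonal complement equal to the intersection of the complements:
$$
V^{\perp}=\bigl(\overline{\operatorname{span}}(U_0+U_1)\bigr)^{\perp}=U_0^{\perp}\cap U_1^{\perp}=\phi_0H^2(\mathbb D)\cap\phi_1H^2(\mathbb D).
$$
This holds because a vector is orthogonal to every $x+y$ with $x\in U_0$, $y\in U_1$ if and only if it is orthogonal to both $U_0$ and $U_1$; the closure and span do not change the annihilator. So the statement reduces to identifying the intersection of two Beurling subspaces.

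Next, since $U_0$ and $U_1$ are $t_z^*$-invariant, their complements are $t_z$-invariant, and so is the intersection. If $V^\perp=\{0\}$ (possible only if one regards the trivial case separately), the statement is degenerate. Otherwise Beurling's theorem gives an inner function $\phi$, unique up to a unimodular constant, with $V^\perp=\phi H^2(\mathbb D)$. It remains to show that $\phi$ is the least common multiple of $\phi_0$ and $\phi_1$, i.e.\ the smallest inner function divisible by both. I would use the standard fact that, for inner $\psi,\chi$, one has $\psi H^2\subset\chi H^2$ if and only if $\chi$ divides $\psi$, meaning $\psi/\chi$ is inner. Given this fact, the argument has two steps:
\begin{itemize}
\item Since $\phi H^2\subset\phi_jH^2$ for $j=0,1$, both $\phi_0$ and $\phi_1$ divide $\phi$.
\item If $\psi$ is any inner common multiple of $\phi_0$ and $\phi_1$, then $\psi H^2\subset\phi_0H^2\cap\phi_1H^2=\phi H^2$, so $\phi$ divides $\psi$.
\end{itemize}
Hence $\phi$ is the least common multiple, well defined up to a constant.

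The main point needing care is the divisibility criterion. If $\psi H^2\subset\chi H^2$, then $\psi=\chi h$ with $h\in H^2$. Taking boundary values, $|h|=1$ a.e.\ on $\mathbb T$, so $h\in H^\infty$ with unimodular boundary values, i.e.\ $h$ is inner. The converse is immediate. I also note that the lcm exists only as an element of the lattice of inner functions, and the argument above in fact proves its existence via Beurling's theorem, provided the intersection is nonzero. The intersection is indeed nonzero because $\phi_0\phi_1$ lies in it, so no degenerate case actually arises.
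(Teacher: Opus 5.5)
Your proof is correct and follows essentially the same route as the paper. Beurling's theorem gives $V^\perp=\phi H^2(\mathbb D)$; the inclusions $U_j\subset V$ give $\phi_j\mid\phi$; and any common inner multiple of $\phi_0,\phi_1$ annihilates $U_0+U_1$, hence lies in $V^\perp$, so $\phi$ divides it. Your explicit identity $V^\perp=U_0^\perp\cap U_1^\perp$ and your check of the divisibility criterion $\psi H^2\subset\chi H^2\iff\chi\mid\psi$ simply make precise what the paper's sequence argument and divisibility claims leave implicit.
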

\begin{proof}
Since $V$ is a $t_{z}^{*}$-invariant subspace, by Beurling's theorem, there exists an inner function $\psi$ such that $V^{\perp}=\psi H^2(\mathbb{D})$.

Note that $U_0\subsetneq V$ and $U_1\subsetneq V$. Thus $\phi_0$ and $\phi_1$ divides $\psi$ and so the least common multiple of $\phi_0$ and $\phi_1$ divides $\psi$.

Let $f\in V$ and $\zeta$ a common multiple of $\phi_0$ and $\phi_1$. Then there exist sequences $(x_n)$ in $U_0$ and $(y_n)$ in $U_1$, such that $(x_n+y_n)\rightarrow  f$. We have $\langle \zeta,x_n+y_n\rangle=0$, because $\zeta\in \phi_0 H^2(\mathbb{D})$ and $\zeta\in \phi_1 H^2(\mathbb{D})$. Therefore $\psi$ divides the smallest common multiple of $\phi_0$ and $\phi_1$, as desired.
\end{proof} 

The following theorem provides a structural restriction on infinite-dimensional minimal $T_{z}^{*}$-invariant subspaces of $H^2(\mathbb{D}^2)$, showing that the closure of each projected subspace must satisfy one of two specific alternatives.

\begin{theorem}\label{t5}
If $M\subset H^2(\mathbb{D}^2)$ is an infinite-dimensional minimal $T_z^*$-invariant subspace, then for every $n\in\mathbb{N}_{0}$, either 
$$
\overline{P_n(M)}=H^2(\mathbb D) \ \text{or} \ \overline{P_n(M)}^\perp
=\phi_{a,\theta}H^2(\mathbb D),
$$
where
$$
\phi_{a,\theta}(z)=\exp\left(a\frac{z+e^{i\theta}}{z-e^{i\theta}}\right),
\qquad a>0,\quad \theta\in\mathbb{R}.
$$
\end{theorem}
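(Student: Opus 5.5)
We argue by contraposition. Fix $n\in\mathbb N_0$ and suppose $V:=\overline{P_n(M)}$ is neither all of $H^2(\mathbb D)$ nor has orthocomplement of the form $\phi_{a,\theta}H^2(\mathbb D)$; we will show $M$ is non-minimal. If $V=\{0\}$ there is nothing to prove for this $n$ (or it contradicts the hypotheses, depending on how one reads the statement), so assume $V\neq\{0\}$. By the Proposition from \cite{Carmo2}, $V$ is a $t_z^*$-invariant subspace of $H^2(\mathbb D)$. Since $V\neq H^2(\mathbb D)$, Beurling's theorem gives $V^\perp=\phi H^2(\mathbb D)$ for a nonconstant inner function $\phi$. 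The plan is to write $\phi$ as the least common multiple of two inner functions $\phi_0,\phi_1$, neither of which divides the other. Put $U_j=(\phi_jH^2(\mathbb D))^\perp$. Lemma \ref{t4} then gives $V=\overline{\operatorname{span}}(U_0+U_1)$, since both sides are $t_z^*$-invariant with orthocomplement $\phi H^2(\mathbb D)$. Non-divisibility means $U_0\not\subset U_1$ and $U_1\not\subset U_0$, and Lemma \ref{t3} then yields that $M$ is non-minimal.

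The core of the argument is therefore function-theoretic: we must decide when an inner function $\phi$ is the lcm of two mutually non-dividing inner divisors. Factor $\phi=cBS_\mu$ with $B$ a Blaschke product and $S_\mu$ singular with measure $\mu$. The lcm of inner functions corresponds to taking the maximum of zero multiplicities and the supremum $\mu_0\vee\mu_1$ of the singular measures. We split into cases.

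\emph{Case 1: $B$ and $S_\mu$ are both nontrivial.} Take $\phi_0=B$ and $\phi_1=S_\mu$.

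\emph{Case 2: $\phi=B$ has at least two distinct zeros $\alpha\neq\beta$.} Take $\phi_0=B/b_\beta^{m_\beta}$ and $\phi_1=B/b_\alpha^{m_\alpha}$.

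\emph{Case 3: $\phi=S_\mu$ and $\mu$ is not a point mass.} Then $\operatorname{supp}\mu$ splits into two disjoint Borel sets $E,F$ of positive measure. Take $\phi_0=S_{\mu|_E}$ and $\phi_1=S_{\mu|_F}$; these are mutually singular, so their lcm is $S_\mu$.

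The remaining cases are the ones that escape this construction. If $\phi=S_\mu$ with $\mu=a\delta_{e^{i\theta}}$, then $\phi=\phi_{a,\theta}$ up to a constant, which is exactly the allowed alternative.

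The other leftover case is $\phi=b_\alpha^m$, a single Blaschke factor with multiplicity. Here $V=(b_\alpha^mH^2)^\perp$ is finite-dimensional and contains the eigenvector $k_\alpha$, so $\mathcal K_{k_\alpha}=\mathbb C k_\alpha$ is an invariant subspace of $V$, proper when $m\ge 2$. We would aim to show that $M$ is then non-minimal directly. The natural tool is the fact from \cite{Carmo2} that $L_g(H^2(\mathbb D))\subset P_n(M)$ for $g\in P_n(M)$: take $g\in P_n(M)$ and produce a nonzero element of $P_n(M)\cap\mathbb C k_\alpha$, which is possible because $P_n(M)=V$ is finite-dimensional and closed. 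The same style of argument used in Lemma \ref{t3} would then give non-minimality. Alternatively, one can argue that finite-dimensionality of $P_n(M)$ together with infinite-dimensionality of $M$ forces a proper invariant piece.

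The main obstacle is this Blaschke-power case, together with making rigorous the passage from ``$P_n(M)\cap U\neq\{0\}$ for a proper invariant $U$'' to non-minimality of $M$. That passage is the mechanism the paper invokes through Lemma \ref{t3}, and we would rely on it as stated.
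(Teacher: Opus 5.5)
Your proposal is essentially the paper's proof. It applies Beurling's theorem to $V=\overline{P_n(M)}$ and uses the same splittings of the inner function: Blaschke part against singular part, two distinct zeros, and the singular measure restricted to two disjoint Borel sets of positive mass. These are fed into Lemmas \ref{t4} and \ref{t3}, leaving exactly the atomic case and $\phi=b_\alpha^m$.

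The case you call the main obstacle is handled in the paper by the one-line remark that $V$ is finite-dimensional. Your ``alternatively'' is the right way to make that rigorous: $\ker(P_n|_M)$ is a closed $T_z^*$-invariant subspace of $M$.
\begin{itemize}
\item It is invariant because $P_n$ commutes with $T_z^*$.
\item It is nonzero because $\dim M=\infty$ while $\dim P_n(M)=\dim V=m$.
\item It is proper because $P_n(M)\neq\{0\}$.
\end{itemize}
This also covers $m=1$, where your first route via $\mathbb{C}k_\alpha$ gives nothing, since then $V=\mathbb{C}k_\alpha$ itself.

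The passage you worry about is equally easy. Suppose $U\subsetneq V$ is closed, $t_z^*$-invariant, and $P_n(M)\cap U\neq\{0\}$. Then $\{f\in M:P_nf\in U\}$ is a closed $T_z^*$-invariant subspace of $M$. It is nonzero by assumption. It is proper because $P_n(M)\subset U$ would force $V\subset U$.

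One remark of yours is wrong. If $\overline{P_n(M)}=\{0\}$, neither alternative holds, so this is not ``nothing to prove''. The hypotheses also do not rule it out. Multiplication $T_wf=wf$ is an isometry commuting with $T_z^*$, so $T_wM$ is again an infinite-dimensional minimal $T_z^*$-invariant subspace, and $P_0(T_wM)=\{0\}$. Read literally, the theorem would therefore imply that no such $M$ exists. It has to be read for those $n$ with $P_n(M)\neq\{0\}$. The paper's proof makes the same tacit restriction: it silently skips the case where $B\phi$ is constant.
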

\begin{proof}
For each $n\in\mathbb{N}_{0}$, since $V_{n}=\overline{P_n(M)}$ is a $t^*_z$-invariant subspace we have by Beurling’s Theorem that $V_{n}^{\perp}=B\phi H^2(\mathbb{D})$, where $B=B(n)$ is a Blaschke product and $\phi=\phi(n)$ is an inner function. Thus
$$
\overline{P_n(M)}=H^2(\mathbb{D}) \ \text{or} \ \overline{P_n(M)}^{\perp}=B\phi H^2(\mathbb{D}).
$$

We must then show that $B$ is constant and $\phi=\phi_{a,\theta}$ where $\phi_{a,\theta}(z)=\exp\left(a\frac{z+e^{i\theta}}{z-e^{i\theta}}\right)$ for some $a>0$ and $\theta\in\mathbb{R}$.

If $B$ and $\phi_{a,\theta}$ are non constants then 
$$
V_{n}=\overline{\operatorname{span}}(BH^2(\mathbb{D})^{\perp}+\phi_{a,\theta} H^2(\mathbb{D})^{\perp})
$$ 
therefore by Lemma \ref{t3} we have $M$ is non minimal.

Suppose then that $\phi_{a,\theta}$ is constant, that is, $V_{n}^{\perp}=BH^2(\mathbb{D})$. If $B$ has just only one zero in $H^2(\mathbb{D})$ we have that $V_{n}$ is finite dimensional and so $M$ is non minimal. On the other hand, if $B$ has two or more zeros in $H^2(\mathbb{D})$ then $B=B_1 B_2$, with $B_1$ and $B_2$ non-constant and cooprimes. Thus by Lemmas \ref{t3} and \ref{t4}, we have $M$ is non minimal.

Having established that $B$ is constant, assume by contradiction that $\phi_{a,\theta}(z)\neq \exp\left(a\frac{z+e^{i\theta}}{z-e^{i\theta}}\right)$ for all $z\in\mathbb{D}$ and $a>0$. Since $\phi_{a,\theta}$ is a singular inner function, by the Herglotz representation theorem there exists a unique positive singular finite Borel measure $\mu$ on $[0, 2\pi)$ such that
$$
\phi_{a,\theta}(z) = \exp \left( -\int_{0}^{2\pi} \frac{e^{i\theta} + z}{e^{i\theta} - z} \, d\mu(\theta) \right), \quad z \in \mathbb{D}.
$$

If $E_0, E_1 \subset [0, 2\pi)$ are disjoint Borel sets with $\mu(E_0) \neq 0$ and $\mu(E_1) \neq 0$, we decompose $\phi_{a,\theta}=\phi_0\phi_1$, where
$$
\phi_k(z) =\exp\left(-\int_{0}^{2\pi}\frac{e^{i\theta}+z}{e^{i\theta}-z}\,d\mu_k(\theta)\right),\quad k\in\{0, 1\},
$$
and $\mu_k = \mu|_{E_k}$, i.e., $\mu_k(E) = \mu(E \cap E_k)$ for every Borel set $E \subset [0, 2\pi)$. Thus considering $U_0^{\perp}=\phi_0 H^2(\mathbb{D})$ and $U_1^{\perp}=\phi_1 H^2(\mathbb{D})$, we have by Lemmas \ref{t3} and \ref{t4} that $M$ is non minimal. Therefore $\phi_{a,\theta}(z)=\exp\left(a\frac{z+e^{i\theta}}{z-e^{i\theta}}\right)$ for some $a>0$ and $\theta\in\mathbb{R}$ and we have the desired result.
\end{proof}

\begin{question1}
Let $M$ be an infinite-dimensional minimal $T_z^*$-invariant subspace of $H^2(\mathbb{D}^2)$. Can the second alternative in Theorem \ref{t5} occur? More precisely, can there exist $n\in\mathbb{N}_0$, $a>0$, and $\theta\in\mathbb{R}$ such that
$$
\overline{P_n(M)}^\perp=\exp\left(a\frac{z+e^{i\theta}}{z-e^{i\theta}}\right)
H^2(\mathbb{D})?
$$
\end{question1}

\begin{question2}
Let $M$ be an infinite-dimensional closed $T_z^*$-invariant subspace of $H^2(\mathbb{D}^2)$ such that
$$
\overline{P_n(M)}=H^2(\mathbb{D})
$$
for every $n\in\mathbb{N}_0$. Must $M$ be non-minimal?
\end{question2}

\end{document}